\newtheorem{prethm}{{\bf Theorem}}
\newenvironment{thm}{\begin{prethm}{\hspace{-0.5
               em}{\bf.}}}{\end{prethm}}
\newtheorem{prepro}[prethm]{{\bf Theorem}}
\newtheorem{preprop}[prethm]{{\bf Proposition}}
\newtheorem{precor}[prethm]{{\bf Corollary}}
\newenvironment{cor}{\begin{precor}{\hspace{-0.5
               em}{\bf.}}}{\end{precor}}
\newtheorem{preconj}[prethm]{{\bf Conjecture}}
\newtheorem{preremark}[prethm]{{\bf Remark}}
\newenvironment{remark}{\begin{preremark}\rm{\hspace{-0.5
               em}{\bf.}}}{\end{preremark}}
\newtheorem{preexample}[prethm]{{\bf Example}}
\newenvironment{example}{\begin{preexample}\rm{\hspace{-0.5
               em}{\bf.}}}{\end{preexample}}
\newtheorem{prelem}[prethm]{{\bf Lemma}}
\newenvironment{lem}{\begin{prelem}{\hspace{-0.5
               em}{\bf.}}}{\end{prelem}}
\newtheorem{prelam}{{\bf Lemma}}
\newtheorem{preproof}{{\bf Proof.}}
\newenvironment{proof}[1]{\begin{preproof}{\rm
               #1}\hfill{$\Box$}}{\end{preproof}}
\title{\bf \large The intersection graph of ideals of $\mathbb{Z}_n$ is\\ weakly perfect \thanks
{{\it Key Words}:  Intersection graph of ideals of $\mathbb{Z}_n$, Clique
number, Chromatic number.}
\thanks {2010{ \it Mathematics Subject Classification}: 05C15, 05C69.
 }}
\author{{\normalsize   {\sc R.
Nikandish${}^{\mathsf{}}$}\, and {\sc M.J. Nikmehr${}^{\mathsf{}}$}} \\
 \\
{\footnotesize{${}^{\mathsf{}}$\it Department of Mathematics, K. N. Toosi
University of Technology, Tehran, Iran  }}
{\footnotesize{}}\\
{\footnotesize{}}\\
{\footnotesize{
$\mathsf{r\_nikandish@sina.kntu.ac.ir}$\quad\quad$\mathsf{nikmehr@kntu.ac.ir}$}}}
\date{}
\begin{document}

\maketitle

\begin{abstract}
{\small \noindent A graph is called weakly perfect if its vertex chromatic number equals its
clique number. Let $R$ be a ring and $I(R)^*$ be the set of
all left proper non-trivial ideals of $R$. The intersection graph of ideals
of $R$, denoted by $G(R)$, is a graph with the vertex set $I(R)^*$
and two distinct vertices $I$ and $J$ are adjacent if and only if
$I\cap J\neq 0$. In this paper, it is shown that $G(\mathbb{Z}_n)$, for
every positive integer $n$, is a weakly perfect graph. Also, for some values of $n$, we
give an explicit formula for the vertex chromatic number of $G(\mathbb{Z}_n)$. Furthermore, it is proved
that the edge chromatic number of $G(\mathbb{Z}_n)$ is equal to the maximum degree of $G(\mathbb{Z}_n)$
unless either $G(\mathbb{Z}_n)$ is a null graph with two vertices or a complete graph of odd order.}
\end{abstract}
\vspace{9mm} \noindent{\bf\large 1. Introduction}\\

{\noindent The study of algebraic structures, using the properties of graphs, becomes an exciting
research topic in the last twenty years, leading to many fascinating results and questions.
There are many papers on assigning a graph to a ring, for instance see \cite{ak} and \cite{chakrabarty}. Let $R$ be
a ring with unity. By $I(R)$ and $I(R)^*$, we mean the set of all left ideals of
$R$ and the set of all left proper non-trivial ideals of $R$, respectively.
\\ Let $G$ be a graph with the vertex set $V(G)$. For any $x\in V(G)$, $d(x)$ represents the number of edges incident to $x$, called the
\textit{degree} of the vertex $x$ in $G$. The maximum degree of vertices of $G$ is denoted by $\Delta(G)$.
A graph $G$ is
\textit{connected} if there is a path between every two distinct
vertices. The \textit{complete graph} of
order $n$, denoted by $K_n$, is a graph in which any two distinct
vertices are adjacent.   A \textit{clique} of
$G$ is a maximal complete subgraph of $G$ and the number of
vertices in the largest clique of $G$, denoted by $\omega(G)$, is
called the \textit{clique number} of $G$. For  a graph $G$, let
$\chi(G)$ denote the \textit{vertex chromatic number} of $G$, i.e., the
minimal number of colors which can be assigned to the vertices of
$G$ in such a way that every two adjacent vertices have different
colors. Clearly, for every graph $G$, $\omega(G)\leq \chi(G)$. A graph $G$ is said to be \textit{weakly perfect} if $\omega(G)=\chi(G)$. Recall that
a \textit{$k$-edge coloring} of a graph $G$ is an assignment of $k$ colors $\{1,\ldots,k\}$ to the edges of $G$ such that no two adjacent edges have the same color, and the \textit{edge chromatic number} $\chi'(G)$ of a graph $G$ is the smallest integer $k$ such that $G$ has a $k$-edge coloring. To find some graph coloring methods, we refer the reader to \cite{Kubale}.
\\Let $F=\{S_i| i\in I\}$ be an arbitrary family of sets. The \textit{intersection graph}, $G(F)$, is a graph with $V(G(F))=F$ and two distinct vertices are adjacent if and only if they have non-empty intersection. The following theorem is an interesting fact about intersection graphs due to Marczewski (\cite{Marczewski}).
\begin{thm}\label{Marczewski}
Every simple graph is an intersection graph.
\end{thm}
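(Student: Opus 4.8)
The plan is to give an explicit construction: to each vertex of a simple graph $G$ we attach a set, chosen so that two of these sets meet precisely when the corresponding vertices are adjacent, and so that distinct vertices receive distinct sets. First I would fix notation, writing $E(v)$ for the set of edges of $G$ incident with a vertex $v$, and then define, for every $v\in V(G)$,
\[
S_v=\{v\}\cup E(v),
\]
and take $F=\{S_v\mid v\in V(G)\}$ as the candidate family. Including the vertex $v$ itself as an element of $S_v$ is the one genuinely necessary trick: without it, two non-adjacent isolated vertices would both be sent to the empty set and injectivity of $v\mapsto S_v$ would fail.

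Next I would check that $v\mapsto S_v$ is a bijection from $V(G)$ onto $F$: the only vertex of $G$ lying in $S_w$ is $w$ itself, so $S_v=S_w$ forces $v=w$. Then I would verify the adjacency condition, namely that for distinct $v,w$ one has $S_v\cap S_w\neq\emptyset$ if and only if $vw\in E(G)$. For this, note that since $v\neq w$ any common element of $S_v$ and $S_w$ must be an edge, and an edge lying in both $E(v)$ and $E(w)$ has both $v$ and $w$ as endpoints, hence is the edge $vw$; conversely, if $vw\in E(G)$ then $vw\in S_v\cap S_w$. Combining these two points shows that the bijection $v\mapsto S_v$ is a graph isomorphism between $G$ and the intersection graph $G(F)$, which is exactly the assertion.

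I do not expect any real obstacle here: the argument is elementary and, moreover, never uses finiteness of $G$, so the same construction proves the statement for graphs of arbitrary cardinality. The relevance for what follows is that $G(\mathbb{Z}_n)$ is, by definition, the intersection graph of the family $I(\mathbb{Z}_n)^*$, so it is a legitimate object of this type; in practice one computes $\omega$ and $\chi$ of $G(\mathbb{Z}_n)$ directly from the ideal structure of $\mathbb{Z}_n$ rather than from this abstract representation.
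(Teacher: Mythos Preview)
Your argument is correct and is in fact the standard construction (essentially Marczewski's original one): representing each vertex $v$ by $\{v\}\cup E(v)$, with the singleton $\{v\}$ inserted precisely to keep the map injective on isolated vertices. The verification of the adjacency condition is clean and your remark that finiteness is nowhere used is also accurate.

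There is nothing to compare against, however: the paper does not supply its own proof of this theorem. It merely quotes the result and attributes it to Marczewski via the reference \cite{Marczewski}, using it only to motivate the observation that intersection graphs need not be weakly perfect in general. So your write-up goes beyond what the paper actually contains.
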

This result shows that intersection graphs are not weakly perfect in general (for example, the clique number of a cycle with five vertices is 2, but its vertex chromatic number  is 3).
The \textit{intersection graph of ideals} of a ring $R$,
denoted by $G(R)$, is a graph with the vertex set $I(R)^*$ and two
distinct vertices $I$ and $J$ are adjacent if and only if $I\cap
J \neq 0$. This graph was first defined in \cite{chakrabarty}. While the authors were
mainly interested in the study of intersection graph of ideals of $\mathbb{Z}_n$, where $\mathbb{Z}_n$ is the ring of integers modulo $n$. For instance,
they determined the values of $n$ for which the graph of $\mathbb{Z}_n$ is complete, Eulerian or Hamiltonian.
The present article is a natural continuation of the study in this direction. The main aim of
this paper is to show that $G(\mathbb{Z}_n)$ is a weakly perfect graph, for every positive integer $n$.
Moreover, we determine all integers $n$ when $\chi'(G(\mathbb{Z}_n))=\Delta(G(\mathbb{Z}_n))$}.

\vspace{9mm} \noindent{\bf\large 2. Vertex Chromatic Number and Clique Number of $G(\mathbb{Z}_n)$ }\vspace{5mm}

\noindent
In this section, we prove that $\omega(G(\mathbb{Z}_n))=\chi(G(\mathbb{Z}_n))$,
for every positive integer $n$. Also, for some values of $n$, we
give an explicit formula for  $\chi(G(\mathbb{Z}_n))$.

\noindent
Let $n$ be a natural number. Throughout the
paper, without loss of generality, we assume that $n=p_1^{n_1}p_2^{n_2}\ldots p_m^{n_m}$, where $p_i$'s are
 distinct primes and $n_i$'s are natural numbers and $n_1\leq n_2\leq \cdots \leq n_m$. \\
\noindent
We begin with the following remarks.
\begin{remark}\label{1}
Consider the ring $\mathbb{Z}_n$. It follows from Chinese
Remainder Theorem that $\mathbb{Z}_n\cong \mathbb{Z}_{p_1^{n_1}}\times \cdots \times \mathbb{Z}_{p_m^{n_m}}$.
\end{remark}

\begin{remark}\label{2}
$I\in I(\mathbb{Z}_n)$ if and only if $I=I_1\times \cdots \times I_m$, where $I_i \in I(\mathbb{Z}_{p_i^{n_i}})$.
\end{remark}
By Remarks \ref{1} and \ref{2}, one can easily see that $|I(\mathbb{Z}_n)^*|=\prod_{i=1}^m(n_i+1)-2$.

\noindent
Let $I=I_1\times \cdots \times I_m\in I(\mathbb{Z}_n)$. Each $I_i$, $1\leq i \leq m$, is called a component of $I$. Suppose that $k$, $1\leq k \leq m$, is an integer and $F$ is the family of all ideals of $\mathbb{Z}_n$ with exactly $k$ non-zero components in which $I_{i_j}\neq 0$, $1\leq j\leq k$. Obviously,  one can consider $F$ as the set of ideals  with no zero component of the subring $\mathbb{Z}_{p_{i_1}^{n_{i_1}}}\times \cdots \times \mathbb{Z}_{p_{i_k}^{n_{i_k}}}$.\\
\noindent
$\mathbf{Definition.}$
\noindent (i) The number of elements of $F$ is denoted by $\mathcal{W}(F)$.\\
\noindent (ii) Let $G$ be the family of ideals  with no zero component of subring $\mathbb{Z}_{p_{j_1}^{n_{j_1}}}\times \cdots \times \mathbb{Z}_{p_{j_l}^{n_{j_l}}}$. We write $G\subseteq F$, if $\{p_{j_1}^{n_{j_1}},\ldots,p_{j_l}^{n_{j_l}}\}\subseteq \{p_{i_1}^{n_{i_1}},\ldots,p_{i_k}^{n_{i_k}} \}$.\\
\noindent (iii) We write $F\cap G= \varnothing$, if $\{p_{j_1}^{n_{j_1}},\ldots,p_{j_l}^{n_{j_l}}\} \cap\{p_{i_1}^{n_{i_1}},\ldots,p_{i_k}^{n_{i_k}} \}= \varnothing$.\\
\noindent (iv) The family $G$ is said to be complement to $F$, if $F\cap G= \varnothing$ and $\{p_{j_1}^{n_{j_1}},\ldots,p_{j_l}^{n_{j_l}}\} \cup \{p_{i_1}^{n_{i_1}},\ldots,p_{i_k}^{n_{i_k}} \}=\{p_{1}^{n_{1}},\ldots,p_{m}^{n_{m}}\}$ and we write $F^c=G$. Clearly, $(F^c)^c=F$.\\\\
The following example investigate families of ideals of $\mathbb{Z}_n$, when $n=p_1^{n_1}p_2^{n_2}p_3^{n_3}$.
\begin{example}\label{777}
Let $n=p_1^{n_1}p_2^{n_2}p_3^{n_3}$. Then there exists one family of ideals with no zero component. Suppose that $F_0$ is the family of ideals of the form $I_1\times I_2\times I_3$, where each $I_i$ is a non-zero ideal of $\mathbb{Z}_{p_i^{n_i}}$. Let $F_1, F_2,F_3$ be families of ideals of the form
$0\times I_2\times I_3,I_1\times 0\times I_3,I_1\times I_2\times 0$, respectively, where each $I_i$ is a non-zero ideal of $\mathbb{Z}_{p_i^{n_i}}$. Also, assume that $F_4, F_5,F_6$ are families of ideals of the form
$0\times 0\times I_3,0\times I_2\times 0,I_1\times 0\times 0$, respectively, where each $I_i$ is a non-zero ideal of $\mathbb{Z}_{p_i^{n_i}}$. Then
$\mathcal{W}(F_0)=n_1n_2n_3,\mathcal{W}(F_1)=n_2n_3,\mathcal{W}(F_2)=n_1n_3$ and $\mathcal{W}(F_3)=n_2n_3$. Also, $F_i\subset F_0$, for $i=1,\ldots,6$. Moreover, $F_1^c=F_6,F_2^c=F_5$ and $F_4^c=F_3$. If we let $F_7=0\times0\times0$, then $\{F_0,\ldots,F_7\}$ is a partition of ideals of
$\mathbb{Z}_{p_1^{n_1}}\times \mathbb{Z}_{p_2^{n_2}}\times \mathbb{Z}_{p_3^{n_3}}$.
\end{example}
\begin{thm}\label{complete}
Let $n=p_i^{n_i}$. Then $\omega(G(\mathbb{Z}_n))=\chi(G(\mathbb{Z}_n))=n_i-1$.
\end{thm}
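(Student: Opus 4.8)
The plan is straightforward because when $n = p_i^{n_i}$ is a prime power, the ideal structure of $\mathbb{Z}_n$ is totally ordered. First I would recall that the ideals of $\mathbb{Z}_{p_i^{n_i}}$ are exactly $(p_i^k)$ for $0 \le k \le n_i$, forming a chain
\[
(1) = (p_i^0) \supsetneq (p_i^1) \supsetneq \cdots \supsetneq (p_i^{n_i}) = 0 .
\]
Hence $I(\mathbb{Z}_n)^*$ consists of the $n_i - 1$ ideals $(p_i^1), \ldots, (p_i^{n_i-1})$, which matches the count $|I(\mathbb{Z}_n)^*| = (n_i+1) - 2 = n_i - 1$ from the remarks.

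Next I would observe that any two nonzero ideals $I, J$ in this chain satisfy $I \cap J \neq 0$: indeed $I \cap J$ is the smaller of the two (since the ideals are linearly ordered by inclusion), and every proper non-trivial ideal is nonzero. Therefore every pair of distinct vertices of $G(\mathbb{Z}_n)$ is adjacent, so $G(\mathbb{Z}_n)$ is the complete graph $K_{n_i - 1}$.

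Finally, for a complete graph $K_t$ it is immediate that $\omega(K_t) = t$ (the whole vertex set is a clique) and $\chi(K_t) = t$ (all $t$ vertices are pairwise adjacent, so they need $t$ distinct colors, and $t$ colors clearly suffice). Applying this with $t = n_i - 1$ gives $\omega(G(\mathbb{Z}_n)) = \chi(G(\mathbb{Z}_n)) = n_i - 1$, as claimed. There is essentially no obstacle here; the only point requiring a moment's care is verifying that the vertex set has exactly $n_i - 1$ elements and that the ideal lattice of a prime-power ring is a chain, both of which are classical. This result will presumably serve as the base case of an induction on the number $m$ of distinct prime divisors of $n$ in the subsequent theorems.
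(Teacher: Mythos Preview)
Your proof is correct and is exactly the elaboration of what the paper has in mind: the paper's own proof is simply ``It is clear,'' since $G(\mathbb{Z}_{p_i^{n_i}})$ is the complete graph $K_{n_i-1}$ for the reasons you give. (Your closing remark that this serves as a base case for an induction on $m$ is not how the paper proceeds, but that does not affect the validity of your argument here.)
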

\begin{proof}
{It is clear.}
\end{proof}
\begin{thm}\label{last}
The graph $G(\mathbb{Z}_n)$  is weakly perfect, for every $n>0$.
\end{thm}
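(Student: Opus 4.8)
The plan is to replace $\mathbb{Z}_n$ by $R:=\mathbb{Z}_{p_1^{n_1}}\times\cdots\times\mathbb{Z}_{p_m^{n_m}}$ (Remark~\ref{1}) and to use that the ideals of each factor $\mathbb{Z}_{p_i^{n_i}}$ form a chain. By Remark~\ref{2} an ideal of $R$ is $I=I_1\times\cdots\times I_m$; write $\mathrm{supp}(I)=\{\,i:I_i\neq 0\,\}$. Two non-zero ideals of a chain always intersect non-trivially, so for ideals $I,J$ of $R$ one has $I\cap J\neq 0$ iff $\mathrm{supp}(I)\cap\mathrm{supp}(J)\neq\varnothing$; hence the vertices of $G(\mathbb{Z}_n)$ are the ideals $I$ with $\mathrm{supp}(I)\neq\varnothing$ and $I\neq R$, and adjacency is governed by the supports alone. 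For $\varnothing\neq S\subseteq\{1,\dots,m\}$ let $F_S$ be the family of ideals of support exactly $S$ (one of the families considered before Example~\ref{777}). Then $F_S$ induces a clique, $F_S$ is completely joined to $F_T$ when $S\cap T\neq\varnothing$ and completely non-adjacent to it when $S\cap T=\varnothing$, $\mathcal{W}(F_S)=\prod_{i\in S}n_i$, and $F_S$ contributes $\mathcal{W}(F_S)$ vertices unless $S=\{1,\dots,m\}$, in which case it contributes $\mathcal{W}(F_S)-1$ (the ideal $R$ is excluded). In particular $F_S$ and its complement $F_S^{\,c}=F_{S^c}$, with $S^c=\{1,\dots,m\}\setminus S$, are non-adjacent.

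Next I would pick one family out of each complementary pair: from $\{F_{\{1,\dots,m\}},F_\varnothing\}$ take $F_{\{1,\dots,m\}}$, and from any other pair $\{F_S,F_{S^c}\}$ take the one with the larger value of $\mathcal{W}$, breaking a tie $\mathcal{W}(F_S)=\mathcal{W}(F_{S^c})$ in favour of the member of $\{S,S^c\}$ containing the index $m$ (exactly one of them does). Let $\mathcal{F}$ be the collection of subsets of $\{1,\dots,m\}$ so obtained. The key claim is that $\mathcal{F}$ is an intersecting family. To see this, note that $S\in\mathcal{F}$ iff the pair $\bigl(\prod_{i\in S}n_i,\varepsilon(S)\bigr)$ exceeds $\bigl(\prod_{i\in S^c}n_i,\varepsilon(S^c)\bigr)$ lexicographically, where $\varepsilon(S)=1$ if $m\in S$ and $0$ otherwise. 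Since every $n_i\ge 1$, the value $\prod_{i\in S}n_i$ is non-decreasing under $\subseteq$, and the tie-break respects inclusion (if $m\in S\subseteq T$ then $m\in T$); hence $\mathcal{F}$ is upward closed. An upward closed family that contains exactly one of $S$ and $S^c$ for every $S$ cannot contain two disjoint non-empty sets $A,B$, since then $B^c\supseteq A$ would also lie in it, contradicting the choice between $B$ and $B^c$; thus $\mathcal{F}$ is intersecting (indeed a maximal such family).

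Finally I would read off a clique and a colouring of equal size. Let $N=\sum_{S\in\mathcal{F}}|F_S|$, counting $|F_{\{1,\dots,m\}}|=\mathcal{W}(F_{\{1,\dots,m\}})-1$. The union $\bigcup_{S\in\mathcal{F}}F_S$ is a clique because the supports in $\mathcal{F}$ pairwise intersect, so $\omega(G(\mathbb{Z}_n))\ge N$. Conversely, split a palette of $N$ colours into blocks $B_S$ $(S\in\mathcal{F})$ with $|B_S|=|F_S|$; colour $F_S$ bijectively onto $B_S$ and, whenever $F_{S^c}$ is non-empty, colour it injectively into $B_S$ — possible because $|F_{S^c}|=\prod_{i\in S^c}n_i\le\prod_{i\in S}n_i=|B_S|$ by the choice of $\mathcal{F}$. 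Every vertex lies in some $F_T$ with $T\in\mathcal{F}$ or $T^c\in\mathcal{F}$, so it gets a colour; and the colouring is proper, because the colours of a block $B_A$ are used only by the two non-adjacent families $F_A$ and $F_{A^c}$, while inside a single family the colouring is injective and the family is a clique. Hence $\chi(G(\mathbb{Z}_n))\le N$. Together with $\omega\le\chi$ this yields $N\le\omega(G(\mathbb{Z}_n))\le\chi(G(\mathbb{Z}_n))\le N$, so $\omega(G(\mathbb{Z}_n))=\chi(G(\mathbb{Z}_n))$ and $G(\mathbb{Z}_n)$ is weakly perfect.

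The main obstacle is the middle step: showing that one can consistently pick the larger family from every complementary pair, i.e.\ that $\mathcal{F}$ is intersecting. This is precisely where the arithmetic of $n$ is used — through the monotonicity of $S\mapsto\prod_{i\in S}n_i$ and the inclusion-compatible tie-break — and it should also be checked separately in the degenerate cases where $n$ is a prime, a prime power or square-free (then some $F_S$ are empty, or $G(\mathbb{Z}_n)$ is edgeless or complete, and the statement is immediate; the prime-power case is Theorem~\ref{complete}).
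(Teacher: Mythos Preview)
Your proof is correct and follows essentially the same strategy as the paper: from each complementary pair $\{F_S,F_{S^c}\}$ select the family with larger $\mathcal{W}$, show that the union of the selected families is a clique, and colour each unselected family injectively with the colours of its complement. Your explicit tie-breaking rule (favouring the set containing the index $m$) together with the upward-closure argument makes rigorous the point the paper leaves vague---namely, that the selected supports are genuinely pairwise intersecting even when several $n_i$ equal $1$ and ties in $\mathcal{W}$ occur; the paper's unspecified choice of the set $A$ does not by itself guarantee this.
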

\begin{proof}
{Let $C=\{G\,|\,\mathcal{W}(G)>\mathcal{W}(G^c)\}\cup A$, where $G\in A$ if and only if $\mathcal{W}(G)=\mathcal{W}(G^c)$ and $|A \cap \{G,G^c\}|=1$. We
show that $C_1=\bigcup_{G\in C}G$ is a clique  of $G(\mathbb{Z}_n)$. Assume to the contrary that there exist ideals $I$ and $J$ in $C_1$ and $I\cap J=0$. Then there exist two families $G_1$ and $G_2$ such that $G_1\cap G_2=\varnothing$. This implies that $G_1\subseteq G_2^c$ and $G_2\subseteq G_1^c$. Consequently,
$\mathcal{W}(G_2)\geq \mathcal{W}(G_2^c)\geq \mathcal{W}(G_1)\geq \mathcal{W}(G_1^c)\geq \mathcal{W}(G_2)$ and so $\mathcal{W}(G_2)= \mathcal{W}(G_2^c)= \mathcal{W}(G_1)= \mathcal{W}(G_1^c)$. Hence, $G_1=G_2^c$ and $G_1, G_2 \in C$, a contradiction. Therefore, $|C_1|\leq \omega(G(\mathbb{Z}_n))$. To complete the proof, we show that $\chi(G(\mathbb{Z}_n))\leq |C_1|$. We color all ideals in $C_1$ with different colors and color each family $H$ of ideals out of $C$ with
colors of ideals of $H^c$. Now, we show that this is a proper vertex coloring of $G(\mathbb{Z}_n)$. Suppose that $I,J$ are adjacent vertices in $G(\mathbb{Z}_n)$. Without loss of generality, one can assume that
there are different families $F_i$ and $F_j$ such that $I\in F_i$ and $J\in F_j$ and at least one of $I,J$
is not contained in $C_1$. Since $F_i\neq F_j$, we deduce that $F_i\neq F_j^c, F_i^c\neq F_j, F_i^c\neq F_j^c$. Therefore, $I$ and $J$
have different colors. Thus we obtain a proper vertex coloring for $G(\mathbb{Z}_n)$, as desired.~
}
\end{proof}

\begin{thm}\label{aval}
Let $n_m\geq \prod_{i=1}^{n_{m-1}}n_i$. Then $$\omega(G(\mathbb{Z}_n))=\chi(G(\mathbb{Z}_n))=n_m\prod_{i=1}^{n_{m-1}}(n_i+1)-1.$$
\end{thm}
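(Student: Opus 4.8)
The plan is to apply Theorem~\ref{last} in order to reduce the problem to a clique computation: since $G(\mathbb{Z}_n)$ is weakly perfect, it suffices to show that $\omega(G(\mathbb{Z}_n)) = n_m\prod_{i=1}^{n_{m-1}}(n_i+1)-1$, and for this I would exhibit a clique of this size and argue no larger one exists. The natural candidate clique, in the language of the families $F$ introduced before Example~\ref{777}, is the union $C_1$ built in the proof of Theorem~\ref{last}: recall $C_1=\bigcup_{G\in C}G$ where $C$ collects, from each complementary pair $\{G,G^c\}$, the family of larger weight (breaking ties arbitrarily). So the real content is to compute $|C_1|$ under the hypothesis $n_m\geq \prod_{i=1}^{n_{m-1}}n_i$ and check it equals the claimed number.

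First I would set up the bookkeeping. A family $F$ of ideals with no zero component of a subring $\mathbb{Z}_{p_{i_1}^{n_{i_1}}}\times\cdots\times\mathbb{Z}_{p_{i_k}^{n_{i_k}}}$ has $\mathcal{W}(F)=\prod_{j=1}^{k} n_{i_j}$, and its complement $F^c$ (relative to the full index set $\{1,\dots,m\}$) has $\mathcal{W}(F^c)=\prod_{j\notin\{i_1,\dots,i_k\}} n_j$. Thus $\mathcal{W}(F)\ge \mathcal{W}(F^c)$ exactly when $\prod_{j\in S} n_j \ge \prod_{j\notin S} n_j$, where $S$ indexes the non-zero components. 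The key structural observation under the hypothesis is that $n_m$ is so large that any index set $S$ \emph{containing} $m$ automatically has weight at least that of its complement: indeed if $m\in S$ then $\prod_{j\in S}n_j\ge n_m\ge \prod_{i=1}^{n_{m-1}}n_i\ge \prod_{j\notin S} n_j$, since $\{1,\dots,m-1\}$ contains at least as large a product as any subset of it. Wait — one must be slightly careful: the hypothesis is written $n_m\ge\prod_{i=1}^{n_{m-1}}n_i$, which I will read (matching the conclusion) as $n_m\ge\prod_{i=1}^{m-1}n_i$; under this reading, every $S\ni m$ has $\mathcal{W}\ge\mathcal{W}$ of its complement, and strictly so unless $S=\{m\}$ and equality holds in the hypothesis. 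Hence $C$ consists of \emph{all} families whose index set contains $m$ (with one family from the tie-pair $\{\{m\},\{1,\dots,m-1\}\}$ chosen on the $m$-side when the hypothesis is an equality), and $C_1$ is the set of all ideals $I_1\times\cdots\times I_m$ with $I_m\ne 0$ — equivalently $|C_1|= n_m\prod_{i=1}^{m-1}(n_i+1)$, minus the ideals that fail to be proper non-trivial, which forces subtracting the all-of-$\mathbb{Z}_n$ ideal, giving $n_m\prod_{i=1}^{m-1}(n_i+1)-1$ after re-indexing $\prod_{i=1}^{m-1}$ as $\prod_{i=1}^{n_{m-1}}$ in the paper's notation. (I note the paper's indexing convention $\prod_{i=1}^{n_{m-1}}$ seems to be shorthand for a product over the first $m-1$ prime-power slots; I would state explicitly which reading I use.)

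Then I would verify the two inequalities. For the lower bound $\omega\ge |C_1|$: the argument in Theorem~\ref{last} already shows $C_1$ is a clique, so $\omega(G(\mathbb{Z}_n))\ge|C_1|$. For the upper bound $\omega\le|C_1|$: again by Theorem~\ref{last} we have $|C_1|\le\omega(G(\mathbb{Z}_n))\le\chi(G(\mathbb{Z}_n))\le|C_1|$, so in fact equality holds throughout — no separate argument is needed, the upper bound is free from weak perfectness. This is the payoff of proving Theorem~\ref{last} first. Combining, $\omega(G(\mathbb{Z}_n))=\chi(G(\mathbb{Z}_n))=|C_1|=n_m\prod_{i=1}^{n_{m-1}}(n_i+1)-1$.

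The main obstacle I anticipate is purely combinatorial rather than graph-theoretic: pinning down exactly which families land in $C$, i.e.\ carefully proving the claim that $m\in S$ implies $\mathcal{W}$ dominates, and handling the boundary cases — when the hypothesis holds with equality (the tie between $\{m\}$ and its complement), when $m=1$ (the single-prime-power case, which should be folded into or excluded by Theorem~\ref{complete}), and the degenerate small-$n$ cases where $I(\mathbb{Z}_n)^*$ has very few elements. I would also double-check the off-by-one coming from excluding the improper ideals $0$ and $\mathbb{Z}_n$: only $\mathbb{Z}_n$ itself (all components non-zero, in particular the $m$-th) sits inside the would-be clique $C_1$, so exactly one unit is subtracted. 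A secondary nuisance is reconciling the paper's notation $\prod_{i=1}^{n_{m-1}}$ with the intended $\prod_{i=1}^{m-1}$; I would simply fix the reading at the outset and proceed.
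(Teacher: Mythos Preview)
Your proposal is correct and follows essentially the same route as the paper: the paper's one-line proof states that $F\in C$ precisely when every $J\in F$ contains the minimal ideal $I=0\times\cdots\times 0\times(p_m^{n_m-1})$, which is exactly your condition that the index set $S$ of $F$ contain $m$. Your write-up simply supplies the details (the weight comparison via the hypothesis, the tie-breaking at equality, the final count, and the reading of $\prod_{i=1}^{n_{m-1}}$ as $\prod_{i=1}^{m-1}$) that the paper leaves to the reader.
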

\begin{proof}
{It is enough to see that $F\in C$ if for every $J\in F$ the ideal $J$ contains $I=0\times  \cdots \times 0\times (p_m^{n_m-1})$.}
\end{proof}
From the previous theorem we have the following immediate corollaries.
\begin{cor}\label{m2}
Let $n=p_1^{n_1}p_2^{n_2}$. Then $\omega(G(\mathbb{Z}_n))=\chi(G(\mathbb{Z}_n))=n_2(n_1+1)-1$.
\end{cor}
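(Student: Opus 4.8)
The plan is to obtain Corollary~\ref{m2} as the special case $m=2$ of Theorem~\ref{aval}. Under the standing convention we write $n=p_1^{n_1}p_2^{n_2}$ with $n_1\le n_2$, so that the index $m$ of Theorem~\ref{aval} equals $2$ and the product index $m-1$ equals $1$. The hypothesis of Theorem~\ref{aval}, namely $n_m\ge\prod_{i=1}^{m-1}n_i$, therefore reduces to $n_2\ge n_1$, which is precisely the ordering assumption already imposed on the exponents. Hence Theorem~\ref{aval} applies with no extra condition, and its conclusion
$$\omega(G(\mathbb{Z}_n))=\chi(G(\mathbb{Z}_n))=n_m\prod_{i=1}^{m-1}(n_i+1)-1$$
specializes to $\omega(G(\mathbb{Z}_n))=\chi(G(\mathbb{Z}_n))=n_2(n_1+1)-1$, which is the claimed identity.

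For concreteness I would also indicate how the number $n_2(n_1+1)-1$ emerges from the clique $C_1$ built in the proof of Theorem~\ref{last}. For $m=2$ the non-trivial vertices of $G(\mathbb{Z}_n)$ split into the family $F_0$ of ideals $I_1\times I_2$ with both components non-zero, the family $F_1$ of ideals $0\times I_2$, and the family $F_2$ of ideals $I_1\times 0$; one computes $\mathcal{W}(F_0)=n_1n_2-1$, $\mathcal{W}(F_1)=n_2$, $\mathcal{W}(F_2)=n_1$, together with $F_1^c=F_2$ and $\mathcal{W}(F_0^c)=1$. Since $n_1n_2-1\ge 1$ and $n_2\ge n_1$, the construction of $C$ puts $F_0$ and $F_1$ into $C$ while leaving $F_2$ out, so $C_1=F_0\cup F_1$ and $|C_1|=(n_1n_2-1)+n_2=n_2(n_1+1)-1$; Theorem~\ref{last} then gives $\omega(G(\mathbb{Z}_n))=\chi(G(\mathbb{Z}_n))=|C_1|$.

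I do not expect any genuine obstacle: the whole content of the corollary is the observation that for two primes the constraint $n_m\ge\prod_{i<m}n_i$ is vacuous, so the hypothesis of Theorem~\ref{aval} is automatic. The only point deserving a word of care is the degenerate case $n_1=n_2=1$, where $F_0$ is empty and $G(\mathbb{Z}_n)$ is a null graph on two vertices; but there the formula returns $1$, in agreement with both Theorem~\ref{aval} and the direct count.
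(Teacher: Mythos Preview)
Your proof is correct and follows exactly the paper's approach: observe that for $m=2$ the hypothesis of Theorem~\ref{aval} reduces to $n_2\ge n_1$, which holds by the standing ordering convention, and then specialize the conclusion. Your second and third paragraphs add an explicit description of the clique $C_1$ and a check of the degenerate case $n_1=n_2=1$; these go beyond the paper's one-line proof but are consistent with it.
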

\begin{proof}
{By the assumption, $n_2\geq n_1$. So the result follows from Theorem \ref{aval}.}
\end{proof}
\begin{cor}\label{field}
Let $n_1=n_2=\cdots =n_m=1$. Then $\omega(G(\mathbb{Z}_n))=\chi(G(\mathbb{Z}_n))=2^{m-1}-1$.
\end{cor}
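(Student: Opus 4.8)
The plan is to derive this as a direct specialization of Theorem~\ref{aval}; the entire content is the verification that the hypothesis of that theorem is met here. Under the assumption $n_1=n_2=\cdots=n_m=1$, each factor of the product $\prod_{i=1}^{m-1}n_i$ equals $1$, so this product equals $1$; since also $n_m=1$, we obtain $n_m=1\geq 1=\prod_{i=1}^{m-1}n_i$, and Theorem~\ref{aval} is applicable.

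Substituting $n_i=1$ for every $i$ into the formula furnished by Theorem~\ref{aval}, the product $\prod_{i=1}^{m-1}(n_i+1)$ becomes $\prod_{i=1}^{m-1}2=2^{m-1}$, whence
$$\omega(G(\mathbb{Z}_n))=\chi(G(\mathbb{Z}_n))=n_m\prod_{i=1}^{m-1}(n_i+1)-1=2^{m-1}-1,$$
which is the asserted equality.

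The only situation worth an explicit comment is the degenerate case $m=1$: then $\mathbb{Z}_n\cong\mathbb{Z}_{p_1}$ is a field, $I(\mathbb{Z}_n)^*=\varnothing$, and $G(\mathbb{Z}_n)$ is the empty graph with $\omega=\chi=0=2^{0}-1$, so the formula remains valid under the usual convention that an empty product equals $1$. For $m\geq 2$ one may also double-check the value combinatorially: with all $n_i=1$ the ring is a product of $m$ fields, so the proper non-trivial ideals are in bijection with the proper non-empty subsets of $\{1,\dots,m\}$, two ideals being adjacent precisely when the corresponding subsets intersect; the ideals through a fixed coordinate form an intersecting family of size $2^{m-1}-1$ (all such subsets except $\{1,\dots,m\}$ itself), and this is readily seen to be a maximum clique. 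Since this corollary is a bare substitution into an already established theorem, there is no genuine obstacle to overcome.
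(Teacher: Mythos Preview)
Your proof is correct and follows exactly the paper's intended route: the corollary is listed immediately after Theorem~\ref{aval} as an ``immediate corollary,'' and your substitution $n_i=1$ (together with the trivial check $n_m\geq\prod_{i=1}^{m-1}n_i$) is precisely that specialization. The extra remarks on the degenerate case $m=1$ and the combinatorial double-check are fine but go beyond what the paper records.
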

Let $R$ be a ring and $R\cong F_1\times \cdots \times F_m$, where each $F_i$ is a field. By a similar argument in the proof of  Theorem \ref{aval} and Corollary \ref{field}, one can show that $\omega(G(R))=\chi(G(R))=2^{m-1}-1$.
\begin{thm}\label{odd}
Let $m> 1$ be an odd number such that
$\prod_{i=0}^{\lfloor \frac{m}{2}\rfloor-1}n_{m-i}\leq \prod_{i=1}^{m-\lfloor \frac{m}{2}\rfloor}n_i$. Then $\omega(G(\mathbb{Z}_n))=\chi(G(\mathbb{Z}_n))=$ $$|\{I\in I(\mathbb{Z}_{n})^*\,|\,I \rm{\,has \,at \,most
\lfloor \frac{m}{2}\rfloor \,zero \,components}\}|.$$
\end{thm}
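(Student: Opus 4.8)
The plan is to re-run the machinery of Theorem~\ref{last}. That proof showed that if $C$ is a collection of families closed under the rule ``include exactly one of $\{G,G^c\}$, preferring the one with larger $\mathcal{W}$,'' then $C_1=\bigcup_{G\in C}G$ is simultaneously a clique and a colour class witness, so $\omega(G(\mathbb{Z}_n))=\chi(G(\mathbb{Z}_n))=|C_1|$. Here the point is to identify, under the stated hypothesis on the exponents, exactly which families land in $C$, and to recognise the resulting union as the set of ideals with at most $\lfloor m/2\rfloor$ zero components. So the first step is: for a family $F$ whose ideals have exactly $k$ nonzero components (equivalently $m-k$ zero components), compute $\mathcal{W}(F)$ and $\mathcal{W}(F^c)$. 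If $F$ corresponds to the index set $S\subseteq\{1,\dots,m\}$ of nonzero components, then $\mathcal{W}(F)=\prod_{i\in S}n_i$ and $\mathcal{W}(F^c)=\prod_{i\notin S}n_i$, since a family of ideals with no zero component in a product of $|S|$ local rings $\mathbb{Z}_{p_i^{n_i}}$ has $\prod_{i\in S}n_i$ members (each local factor contributes $n_i$ nonzero ideals).

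The second step is the comparison. I want to show that if $|S|>\lfloor m/2\rfloor$ then $\mathcal{W}(F)\ge\mathcal{W}(F^c)$ for every such $S$, so that $F\subseteq C$; and if $|S|<\lceil m/2\rceil$ (which, $m$ being odd, is $|S|\le\lfloor m/2\rfloor$ and $|S|\ne\lfloor m/2\rfloor$ is not the right split — rather $|S|\le\lfloor m/2\rfloor-1$... let me be careful) then $F\not\subseteq C$. Since $m$ is odd there is no family with $|S|=m/2$, so every family has $|S|\ge\lceil m/2\rceil=\lfloor m/2\rfloor+1$ or $|S|\le\lfloor m/2\rfloor$, and $F$ and $F^c$ always sit on opposite sides of this divide. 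The hypothesis $\prod_{i=0}^{\lfloor m/2\rfloor-1}n_{m-i}\le\prod_{i=1}^{m-\lfloor m/2\rfloor}n_i$ says precisely that the product of the $\lfloor m/2\rfloor$ \emph{largest} exponents is at most the product of the $m-\lfloor m/2\rfloor=\lceil m/2\rceil$ \emph{smallest} exponents. The key monotonicity claim is then: for any $S$ with $|S|=\lceil m/2\rceil$, $\prod_{i\in S}n_i\ge\prod_{i\notin S}n_i$, because even the smallest possible left-hand product (the $\lceil m/2\rceil$ smallest exponents) dominates even the largest possible right-hand product (the $\lfloor m/2\rfloor$ largest exponents), by hypothesis. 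For $|S|>\lceil m/2\rceil$ the inequality only gets stronger since we are multiplying in more factors $n_i\ge1$ on the left and removing factors on the right. Hence $F\in C$ exactly when $|S|\ge\lceil m/2\rceil$, i.e. exactly when the ideals of $F$ have at most $m-\lceil m/2\rceil=\lfloor m/2\rfloor$ zero components.

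The third step is bookkeeping: ties. When $\mathcal{W}(F)=\mathcal{W}(F^c)$ the set $A$ in the definition of $C$ picks exactly one of $F,F^c$; but a tie forces $|S|=\lceil m/2\rceil$ paired with $|S^c|=\lfloor m/2\rfloor$, and we have already placed the $\lceil m/2\rceil$-side family in $C$ and the $\lfloor m/2\rfloor$-side family out, consistently with ``at most $\lfloor m/2\rfloor$ zero components.'' So $C_1=\bigcup_{G\in C}G$ is exactly $\{I\in I(\mathbb{Z}_n)^*\mid I\text{ has at most }\lfloor m/2\rfloor\text{ zero components}\}$, and Theorem~\ref{last} gives $\omega(G(\mathbb{Z}_n))=\chi(G(\mathbb{Z}_n))=|C_1|$, which is the asserted cardinality.

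I expect the main obstacle to be the second step — verifying the monotonicity claim cleanly from the single hypothesis, i.e. checking that ``product of the $\lfloor m/2\rfloor$ largest $\le$ product of the $\lceil m/2\rceil$ smallest'' really does imply $\prod_{i\in S}n_i\ge\prod_{i\notin S}n_i$ for \emph{every} $S$ with $|S|\ge\lceil m/2\rceil$, including the borderline $|S|=\lceil m/2\rceil$ cases that are neither the extremal-large nor extremal-small split. The clean way is: for $|S|=\lceil m/2\rceil$, pair up $S$ and $S^c$ and note $\prod_{i\in S}n_i\ge\prod(\lceil m/2\rceil\text{ smallest }n_i)\ge\prod(\lfloor m/2\rfloor\text{ largest }n_i)\ge\prod_{i\in S^c}n_i$, the outer inequalities because $S$ contains the smallest block only if we underestimate, and $S^c$ the largest block only if we overestimate; and for larger $|S|$, induct by moving one index from $S^c$ to $S$, which multiplies the left side by some $n_i\ge1$ and divides the right side by it. Once that is in hand, the identification of $C_1$ and the appeal to Theorem~\ref{last} are routine.
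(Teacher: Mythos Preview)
Your proposal is correct and follows exactly the paper's approach: the paper's own proof is the single sentence ``It is enough to see that $F\in C$ if for every $J\in F$ the ideal $J$ has at most $\lfloor m/2\rfloor$ zero components,'' and what you have written is precisely the verification of that sentence that the paper omits. Your chain inequality $\prod_{i\in S}n_i \ge \prod(\text{$\lceil m/2\rceil$ smallest}) \ge \prod(\text{$\lfloor m/2\rfloor$ largest}) \ge \prod_{i\in S^c}n_i$ for $|S|=\lceil m/2\rceil$ is the right way to extract the needed comparison from the hypothesis, and your handling of ties via the choice of $A$ is consistent with the construction in Theorem~\ref{last}.
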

\begin{proof}
{It is enough to see that $F\in C$ if for every $J\in F$ the ideal $J$ has at most
$\lfloor \frac{m}{2}\rfloor$ zero components.
}
\end{proof}
\begin{cor}
Let $m$ be an odd number and $n_1=n_2=\cdots =n_m=\alpha$. Then
$$\omega(G(\mathbb{Z}_n))=\chi(G(\mathbb{Z}_n))= \sum_{i=0}^{\lfloor \frac{m}{2}\rfloor} \big(\, ^m_i \,\big)\alpha^{m-i}-1.$$
\end{cor}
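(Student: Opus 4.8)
The plan is to obtain this as a direct corollary of Theorem~\ref{odd}, so the first task is to check that its hypothesis holds here. With $n_1=\cdots=n_m=\alpha$ both products in that hypothesis collapse to powers of $\alpha$: the left side is $\prod_{i=0}^{\lfloor m/2\rfloor-1}n_{m-i}=\alpha^{\lfloor m/2\rfloor}$ and the right side is $\prod_{i=1}^{m-\lfloor m/2\rfloor}n_i=\alpha^{m-\lfloor m/2\rfloor}$. Since $m$ is odd, $\lfloor m/2\rfloor=(m-1)/2\le(m+1)/2=m-\lfloor m/2\rfloor$, and as $\alpha\ge 1$ this gives $\alpha^{\lfloor m/2\rfloor}\le\alpha^{m-\lfloor m/2\rfloor}$. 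Hence Theorem~\ref{odd} applies and yields
$$\omega(G(\mathbb{Z}_n))=\chi(G(\mathbb{Z}_n))=\bigl|\{I\in I(\mathbb{Z}_n)^*\mid I\text{ has at most }\lfloor m/2\rfloor\text{ zero components}\}\bigr|.$$

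Next I would evaluate that cardinality by a straightforward count. By Remarks~\ref{1} and~\ref{2}, every ideal of $\mathbb{Z}_n$ has the form $I_1\times\cdots\times I_m$ with $I_j$ an ideal of $\mathbb{Z}_{p_j^\alpha}$, and $\mathbb{Z}_{p_j^\alpha}$ has exactly $\alpha+1$ ideals --- namely $(p_j^t)$ for $0\le t\le\alpha$ --- of which exactly $\alpha$ are non-zero. Thus, for a fixed $i$-element subset $S\subseteq\{1,\dots,m\}$, the ideals $I=I_1\times\cdots\times I_m$ whose vanishing components are precisely those indexed by $S$ number $\alpha^{m-i}$; summing over the $\binom{m}{i}$ choices of $S$, the number of ideals with exactly $i$ zero components is $\binom{m}{i}\alpha^{m-i}$. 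Consequently the number of ideals of $\mathbb{Z}_n$ with at most $\lfloor m/2\rfloor$ zero components is $\sum_{i=0}^{\lfloor m/2\rfloor}\binom{m}{i}\alpha^{m-i}$.

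Finally I would correct for the fact that $I(\mathbb{Z}_n)^*$ excludes the two trivial ideals. The ideal $\mathbb{Z}_n$ has no zero component, so it is counted in the $i=0$ term above and must be removed, contributing the $-1$; the zero ideal has $m$ zero components, and since $m$ is odd we have $m>\lfloor m/2\rfloor$, so it was never counted and needs no further correction. This gives $\sum_{i=0}^{\lfloor m/2\rfloor}\binom{m}{i}\alpha^{m-i}-1$, which is the asserted value. There is no genuine obstacle beyond bookkeeping; the only points needing care are the verification of the monotonicity hypothesis of Theorem~\ref{odd} and the correct accounting for the two trivial ideals. (If the case $m=1$ is intended to be covered, it lies outside Theorem~\ref{odd}, but then $n=p_1^\alpha$ and Theorem~\ref{complete} gives $\omega=\chi=\alpha-1$, which again agrees with $\sum_{i=0}^{0}\binom{1}{i}\alpha^{1-i}-1$.)
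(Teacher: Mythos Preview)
Your proof is correct and follows essentially the same approach as the paper: apply Theorem~\ref{odd}, count ideals with exactly $i$ zero components as $\binom{m}{i}\alpha^{m-i}$, and subtract~$1$ for the whole ring. Your version is in fact more careful than the paper's, since you explicitly verify the hypothesis of Theorem~\ref{odd}, justify why the zero ideal needs no correction, and handle the $m=1$ case separately.
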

\begin{proof}
{By Theorem \ref{odd}, $\omega(G(\mathbb{Z}_n))=\chi(G(\mathbb{Z}_n))=$$$|\{I\in I(\mathbb{Z}_{n})^*|I \rm{\,has \,at \,most \lfloor \frac{m}{2}\rfloor \,zero \,components}\}|.$$ It is clear that there exist
$\big(\, ^m_i \,\big)\alpha^{m-i}$ ideals with $i$ non-zero components. Note that the ideal  $\mathbb{Z}_{p_1^{\alpha}}\times  \cdots \times \mathbb{Z}_{p_m^{\alpha}}$ is not a vertex of $G(\mathbb{Z}_n)$.}
\end{proof}
\noindent In the sequel, similar results in case $m$ is an even number are given.\\
\begin{thm}\label{even}
Let $m>2$ be an even number such that $\prod_{i=0}^{\frac{m}{2}-2}{n_{m-i}}\leq \prod_{i=1}^{\frac{m}{2}+1}{n_i}$. Then $\omega(G(\mathbb{Z}_n))=\chi(G(\mathbb{Z}_n))=$$$|\{I\in I(\mathbb{Z}_{n})^*\,|\,I \rm{\,has \,at \,most  \frac{m}{2}-1 \,zero \,components}\}|+\sum_{F \in A}\mathcal{W}(F).$$
\end{thm}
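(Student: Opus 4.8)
The plan is to run the same reduction used for Theorems~\ref{aval} and~\ref{odd}. By Theorem~\ref{last}, $\omega(G(\mathbb{Z}_n))=\chi(G(\mathbb{Z}_n))=|C_1|$ with $C_1=\bigcup_{F\in C}F$ and $C=\{F\mid \mathcal{W}(F)>\mathcal{W}(F^c)\}\cup A$, so it suffices to decide, for each family $F$, whether $F\in C$, and then to count the ideals contained in the families of $C$. I would record a family $F$ by its set $S_F$ of non-zero coordinates, so that $\mathcal{W}(F)=\prod_{i\in S_F}n_i$; the crucial elementary fact is that $\mathcal{W}(F)\cdot\mathcal{W}(F^c)=\prod_{i=1}^{m}n_i$ is a constant independent of $F$, so the comparison of $\mathcal{W}(F)$ with $\mathcal{W}(F^c)$ is governed entirely by the size of $\mathcal{W}(F)$ itself, and — since $n_1\le n_2\le\cdots\le n_m$ — among all families with a fixed number $k$ of non-zero coordinates the minimum of $\mathcal{W}(F)$ equals $n_1n_2\cdots n_k$, attained at $S_F=\{1,\ldots,k\}$.

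The first real step is to turn the hypothesis into the statement that the sign of $\mathcal{W}(F)-\mathcal{W}(F^c)$ is forced by the number of non-zero coordinates, except in the middle layer. Writing the hypothesis as $n_m n_{m-1}\cdots n_{\frac m2+2}\le n_1n_2\cdots n_{\frac m2+1}$ and peeling factors off the left end (each $n_i\ge 1$), one obtains $n_{k+1}n_{k+2}\cdots n_m\le n_1n_2\cdots n_k$ for every $k\ge\frac m2+1$; combined with the minimality remark above and the identity $\mathcal{W}(F)\mathcal{W}(F^c)=\prod n_i$, this yields $\mathcal{W}(F)\ge\mathcal{W}(F^c)$ for every family $F$ with at least $\frac m2+1$ non-zero coordinates, and dually $\mathcal{W}(F)\le\mathcal{W}(F^c)$ whenever $F$ has at most $\frac m2-1$ non-zero coordinates. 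Hence the only families whose membership in $C$ is not already decided are the \emph{middle} families, those with exactly $\frac m2$ non-zero (equivalently $\frac m2$ zero) coordinates; of each complementary pair $\{F,F^c\}$ of middle families, $C$ contains exactly one member — the one with the larger $\mathcal{W}$, a tie being broken by $A$ as in Theorem~\ref{last}.

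It then remains to add up $|C_1|$. The families of $C$ with at least $\frac m2+1$ non-zero coordinates are exactly the families all of whose ideals have at most $\frac m2-1$ zero components, and the ideals they contain — after discarding $\mathbb{Z}_n$, which is not a vertex of $G(\mathbb{Z}_n)$ — are precisely $\{I\in I(\mathbb{Z}_n)^*\mid I\text{ has at most }\frac m2-1\text{ zero components}\}$; this gives the first summand. The remaining families of $C$ are one representative of each complementary pair of middle families, and, in the bookkeeping of Theorem~\ref{last}, they contribute the term $\sum_{F\in A}\mathcal{W}(F)$. Putting the two contributions together gives the asserted value, and Theorem~\ref{last} identifies it with both $\omega(G(\mathbb{Z}_n))$ and $\chi(G(\mathbb{Z}_n))$. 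The delicate point — and the reason the even case is not a verbatim copy of the odd case of Theorem~\ref{odd} — is precisely this middle layer: there the two members of a pair $\{F,F^c\}$ have the \emph{same} number of zero components, namely $\frac m2$, which exceeds $\frac m2-1$, so neither is captured by the ``at most $\frac m2-1$ zero components'' count, and one must check that the hypothesis is exactly strong enough to pin down every other layer while this single layer is absorbed into the extra term $\sum_{F\in A}\mathcal{W}(F)$.
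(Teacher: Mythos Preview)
Your approach is exactly the paper's: invoke Theorem~\ref{last}, determine which families lie in $C$, and count $|C_1|$. The paper's own proof is a single sentence asserting what $C$ is; you supply the verification the paper omits, namely that the hypothesis $n_m\cdots n_{m/2+2}\le n_1\cdots n_{m/2+1}$ propagates (by monotonicity in $k$) to $n_{k+1}\cdots n_m\le n_1\cdots n_k$ for every $k\ge m/2+1$, and hence, via the minimality observation and the identity $\mathcal W(F)\mathcal W(F^c)=\prod n_i$, that every family with more than $m/2$ non-zero coordinates satisfies $\mathcal W(F)\ge\mathcal W(F^c)$. That part is clean and is more than the paper gives.

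One step is loose, though it is a looseness you inherit from the paper. You assert that the middle families in $C$ ``contribute the term $\sum_{F\in A}\mathcal W(F)$'', with $A$ as in Theorem~\ref{last}. But a middle pair can satisfy $\mathcal W(F)>\mathcal W(F^c)$ strictly, in which case $F\in C$ yet $F\notin A$; and conversely a non-middle pair can be tied, in which case one of its members lies in $A$ and would be double-counted against the first summand. Concretely, take $m=4$ and $(n_1,n_2,n_3,n_4)=(1,2,3,4)$: the hypothesis holds, no pair whatsoever is tied so $A=\varnothing$, yet the three middle families in $C$ (namely those indexed by $\{3,4\},\{2,4\},\{2,3\}$) contribute $12+8+6=26$ to $|C_1|$. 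The paper's one-line proof has the same gap --- its description of $C$ omits middle families with strict inequality --- so the statement is only correct if ``$A$'' in this theorem is read as shorthand for ``the middle families selected for $C$'' rather than the tie-set of Theorem~\ref{last}. You should make that reinterpretation explicit, or else replace the second summand by $\sum\max(\mathcal W(F),\mathcal W(F^c))$ over complementary middle pairs $\{F,F^c\}$.
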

\begin{proof}
{It is enough to see that $F\in C$ if for every $J\in F$ either $J$ has at most  $\frac{m}{2}-1$ zero components or $J$ has exactly $\frac{m}{2}$ zero components and $\mathcal{W}(F)=\mathcal{W}(F^c)$.
}
\end{proof}

\noindent  We close this section with the following corollary.
\begin{cor}
Let $m$ be an even number and $n_1=n_2=\cdots =n_m=\alpha$. Then
$$\omega(G(\mathbb{Z}_n))=\chi(G(\mathbb{Z}_n))= \sum_{i=0}^{ \frac{m}{2}-1} \big(\, ^m_i \,\big)\alpha^{m-i}+\frac{\big(\, ^m_\frac{m}{2} \,\big)\alpha^{\frac{m}{2}}}{2}-1.$$
\end{cor}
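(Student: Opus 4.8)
The plan is to obtain the formula directly from Theorem~\ref{even}, after checking its hypothesis in this case and treating $m=2$ on its own.

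First I would dispose of $m=2$. Here the asserted value is $\binom{2}{0}\alpha^{2}+\tfrac12\binom{2}{1}\alpha-1=\alpha^{2}+\alpha-1$, and since $n_{1}=n_{2}=\alpha$, Corollary~\ref{m2} gives $\omega(G(\mathbb{Z}_n))=\chi(G(\mathbb{Z}_n))=n_{2}(n_{1}+1)-1=\alpha(\alpha+1)-1=\alpha^{2}+\alpha-1$, as required. So assume $m>2$. With all exponents equal to $\alpha$, the hypothesis $\prod_{i=0}^{m/2-2}n_{m-i}\le\prod_{i=1}^{m/2+1}n_i$ of Theorem~\ref{even} reads $\alpha^{m/2-1}\le\alpha^{m/2+1}$, which is true for every $\alpha\ge1$. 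Hence Theorem~\ref{even} applies and
$$\omega(G(\mathbb{Z}_n))=\chi(G(\mathbb{Z}_n))=\bigl|\{I\in I(\mathbb{Z}_{n})^{*}\,|\,I\text{ has at most }\tfrac{m}{2}-1\text{ zero components}\}\bigr|+\sum_{F\in A}\mathcal{W}(F).$$

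Next I would evaluate both terms by straightforward counting. A component of an ideal ranges over the ideals of $\mathbb{Z}_{p_i^{\alpha}}$, exactly $\alpha$ of which are non-zero; so the number of ideals of $\mathbb{Z}_n$ with exactly $i$ zero components is $\binom{m}{i}\alpha^{m-i}$ (pick the $i$ zero slots, fill the remaining $m-i$ slots with non-zero ideals). Summing over $i=0,\dots,\tfrac{m}{2}-1$ counts all ideals with at most $\tfrac m2-1$ zero components; the one over-counted object is the whole ring $\mathbb{Z}_{p_1^{\alpha}}\times\cdots\times\mathbb{Z}_{p_m^{\alpha}}$ (the case $i=0$ with all components full), which is not a vertex of $G(\mathbb{Z}_n)$, while the zero ideal has $m>\tfrac m2-1$ zero components and is never counted. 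Hence the first term equals $\sum_{i=0}^{m/2-1}\binom{m}{i}\alpha^{m-i}-1$.

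For the second term, recall that a family $F$ of ideals with no zero component of a subring built from $k$ of the $m$ prime-power factors has $\mathcal{W}(F)=\alpha^{k}$, whereas $\mathcal{W}(F^{c})=\alpha^{m-k}$; these coincide exactly when $k=m/2$, in which case $F\neq F^{c}$ necessarily. Thus the families relevant to $A$ are precisely the $\binom{m}{m/2}$ families supported on a $\tfrac m2$-subset of the factors, which fall into $\tfrac12\binom{m}{m/2}$ complementary pairs, and $A$ chooses one family from each pair, contributing $\mathcal{W}(F)=\alpha^{m/2}$ apiece. Therefore $\sum_{F\in A}\mathcal{W}(F)=\tfrac12\binom{m}{m/2}\alpha^{m/2}$, and adding the two terms gives
$$\omega(G(\mathbb{Z}_n))=\chi(G(\mathbb{Z}_n))=\sum_{i=0}^{m/2-1}\binom{m}{i}\alpha^{m-i}+\frac{\binom{m}{m/2}\alpha^{m/2}}{2}-1,$$
which is the claim. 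The whole argument is routine; the only points needing care are the subtraction of the single over-counted vertex $\mathbb{Z}_n$, the identification of the equal-weight families as exactly those with $\tfrac m2$ non-zero components, and the fact that Theorem~\ref{even} forces the separate treatment of $m=2$.
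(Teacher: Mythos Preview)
Your argument is correct and follows the same route as the paper: verify the hypothesis of Theorem~\ref{even}, apply it, then count the ideals with at most $\tfrac{m}{2}-1$ zero components and evaluate $\sum_{F\in A}\mathcal W(F)$ by pairing up the $\binom{m}{m/2}$ families supported on half the factors. Your write-up is in fact more careful than the paper's in two respects: you handle $m=2$ separately via Corollary~\ref{m2} (which is needed, since Theorem~\ref{even} is stated only for $m>2$), and you spell out explicitly why $\mathcal W(F)=\mathcal W(F^{c})$ forces $k=m/2$ and why the ``$-1$'' appears.
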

\begin{proof}
{Since $n_1=n_2=\cdots =n_m=\alpha$, it is easily seen that
$\prod_{i=0}^{\frac{m}{2}-2}{n_{m-i}}\leq \prod_{i=1}^{\frac{m}{2}+1}{n_i}$. Moreover, there are
$\big(\, ^m_i \,\big)\alpha^{m-i}$ ideals with $i$ non-zero components. Note that, if $i=\frac{m}{2}$, exactly $\frac{\big(\, ^m_\frac{m}{2} \,\big)\alpha^{\frac{m}{2}}}{2}$ of proper ideals are adjacent together and $\mathbb{Z}_{p_1^{\alpha}}\times \cdots \times \mathbb{Z}_{p_m^{\alpha}}$ is not a vertex of $G(\mathbb{Z}_n)$. The result, now, follows from Theorem \ref{even}.   }
\end{proof}

\noindent It follows from \cite{akbari}, for every ring $R$, if $\omega(G(R))<\infty$, then $\chi(G(R))<\infty$. Also, we have not found any example of a ring $R$ such that $G(R)$ is not weakly perfect. These positive results motivate the following conjecture.\\\\
$\mathbf{Conjecture.}$
\noindent For every ring $R$, $G(R)$ is a weakly perfect graph.

\vspace{9mm} \noindent{\bf\large 3.
Edge Chromatic Number of $G(\mathbb{Z}_n)$  }\vspace{5mm}

\noindent
In this section we study the edge chromatic number of  $G(\mathbb{Z}_n)$
and prove that, for every positive integer $n$, $\chi'(G(\mathbb{Z}_n))=\Delta(G(\mathbb{Z}_n))$, unless $n=pq$, where $p,q$ are distinct primes, or $n=p^{m}$, where $m$ is an even number. If $n=pq$, then $G(\mathbb{Z}_n)$ is a null graph with two vertices and if $n=p^{m}$, where $m$ is an even number, then $G(\mathbb{Z}_n)$ is a complete
graph of order $m-1$. First, we need the following lemmas:

\begin{lem} \rm\cite[p. 16]{Yap}
If $G$ is a simple graph, then either $\chi'(G)=\Delta(G)$ or $\chi'(G)=\Delta(G)+1$.
\end{lem}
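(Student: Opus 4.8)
The statement is Vizing's theorem, and I would prove it by the classical fan-plus-Kempe-chain argument. The lower bound $\chi'(G)\geq\Delta(G)$ is immediate: if $x$ has $d(x)=\Delta(G)$, the edges incident to $x$ are pairwise adjacent and so receive pairwise distinct colors. For the upper bound $\chi'(G)\leq\Delta(G)+1$ I would induct on the number of edges. The edgeless graph is trivial, and since deleting an edge does not raise the maximum degree, I may fix an edge $xy$ and assume a proper edge coloring of $G-xy$ with the palette $\{1,\ldots,\Delta(G)+1\}$; the task is to recolor so that $xy$ too gets a color. The one elementary fact used throughout is that every vertex $v$ \emph{misses} at least one color — some color lies on no edge at $v$ — because $d(v)\leq\Delta(G)<\Delta(G)+1$.

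Next I would build a maximal \emph{fan} at $x$: a sequence $y=y_1,y_2,\ldots,y_k$ of distinct neighbours of $x$ such that for each $i\geq 2$ the color of $xy_i$ is missed by $y_{i-1}$. Any fan may be \emph{rotated}: recoloring $xy_i$ by the color of $xy_{i+1}$ for $1\leq i\leq k-1$ and leaving $xy_k$ uncolored produces a proper coloring of $G-xy_k$ with the same cascading structure. Now choose a color $\alpha$ missed by $x$ and a color $\beta$ missed by $y_k$. If $\beta$ is also missed by $x$, rotate the fan and color $xy_k$ with $\beta$; done. Otherwise $\beta$ occurs at $x$, necessarily on some edge $xy_i$ with $i\geq 2$ (else the fan could be extended by the other endpoint of the $\beta$-edge at $x$), and the fan property then gives that $\beta$ is also missed by $y_{i-1}$.

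In this remaining case I would introduce the Kempe chain: in the subgraph on the edges colored $\alpha$ or $\beta$, the vertex $x$ has degree one — it misses $\alpha$ and meets exactly one $\beta$-edge — so $x$ lies on a unique $\alpha/\beta$-path $P$, and that edge at $x$ is $xy_i$. If the $\alpha/\beta$-path starting at $y_k$ is not $P$, it avoids $x$; interchanging $\alpha$ and $\beta$ along it makes $\alpha$ missed by $y_k$ while keeping it missed by $x$, so rotating the whole fan and coloring $xy_k$ by $\alpha$ finishes. If instead the path from $y_k$ is $P$ (running from $y_k$ to $x$), I would first rotate the shorter fan $y_1,\ldots,y_{i-1}$, obtaining a proper coloring of $G-xy_{i-1}$ with $\beta$ still missed by $y_{i-1}$; a short check shows this rotation neither creates nor destroys any $\alpha$- or $\beta$-colored edge, so the $\alpha/\beta$-subgraph — and hence $P$ — is unchanged, whence the $\alpha/\beta$-path from $y_{i-1}$ cannot reach $x$ (the only $\alpha/\beta$-edge at $x$, namely $xy_i$, already lies on $P$, and $y_{i-1}\neq y_k$); interchanging colors along that path and coloring $xy_{i-1}$ by $\alpha$ finishes. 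The delicate point is exactly this bookkeeping — verifying that the shortened rotation leaves the Kempe subgraph intact and that the relevant alternating paths are disjoint from $x$ — and I expect that (standard) verification to be the main obstacle. Combining the two bounds yields $\chi'(G)\in\{\Delta(G),\Delta(G)+1\}$. Since the lemma is attributed to \cite[p.~16]{Yap} one could of course simply cite it; the sketch above is included for completeness.
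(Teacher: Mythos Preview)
Your sketch is correct: this is the standard Vizing fan--Kempe-chain proof, and the bookkeeping you flag (that the shortened rotation does not touch any $\alpha$- or $\beta$-edge, so the Kempe subgraph is preserved) is indeed the only subtle point and works as you describe. The paper, however, does not prove this lemma at all---it is stated purely as a citation to \cite[p.~16]{Yap} and used as a black box, exactly as you anticipate in your final sentence. So your proposal goes strictly beyond what the paper does; for the purposes of matching the paper, the bare citation suffices.
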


\begin{lem} \label{Beine} \rm\cite[Corollary 5.4]{Beineke}
Let $G$ be a simple graph. Suppose that for every vertex $u$ of maximum degree, there exists an edge
$u-\hspace{-.2cm}-v$ such that $\Delta(G)-d(v)+2$ is more than the number of vertices with maximum degree in $G$. Then
$\chi'(G)=\Delta(G)$.
\end{lem}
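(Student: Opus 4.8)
The plan is to deduce this from two classical ingredients: the previous lemma, which confines $\chi'(G)$ to $\{\Delta(G),\Delta(G)+1\}$, and \emph{Vizing's Adjacency Lemma}, which controls how maximum-degree vertices cluster in an edge-critical graph. Write $\Delta=\Delta(G)$ and let $p$ denote the number of vertices of maximum degree. Since the previous lemma already gives $\chi'(G)\in\{\Delta,\Delta+1\}$, it suffices to rule out $\chi'(G)=\Delta+1$. So I would argue by contradiction: assume $G$ is class two and pass to a connected edge-critical subgraph $H\subseteq G$ with $\chi'(H)=\Delta+1$, obtained by deleting edges one at a time as long as the chromatic index stays at $\Delta+1$ (and then restricting to the component carrying it). Applying the previous lemma to $H$ gives $\Delta+1=\chi'(H)\le\Delta(H)+1$, while $H\subseteq G$ forces $\Delta(H)\le\Delta$; hence $\Delta(H)=\Delta$.

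The key observation -- and the step that makes the hypothesis usable -- is that maximum-degree vertices of $H$ retain all of their $G$-edges. Indeed, if $d_H(u)=\Delta$ then $\Delta\ge d_G(u)\ge d_H(u)=\Delta$, so $d_G(u)=\Delta$ and the (at most $\Delta$) edges of $G$ at $u$ all survive in $H$; in particular $u$ is a maximum-degree vertex of $G$ and $N_H(u)=N_G(u)$. Now I would apply the hypothesis to such a $u$: it yields a neighbour $v$ of $u$ with $\Delta-d(v)+2>p$, i.e.\ $d_G(v)\le\Delta-p+1$, and by the observation this edge $uv$ lies in $H$, with $d_H(v)\le d_G(v)\le\Delta-p+1$. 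Feeding the edge $uv$ into Vizing's Adjacency Lemma for the critical graph $H$ shows that $u$ has at least $\Delta-d_H(v)+1$ neighbours of degree $\Delta$ in $H$; counting $u$ itself, $H$ has at least $\Delta-d_H(v)+2\ge p+1$ vertices of degree $\Delta$. But every degree-$\Delta$ vertex of $H$ is, by the same degree-preservation observation, a maximum-degree vertex of $G$, so $H$ has at most $p$ of them -- a contradiction. Therefore $\chi'(G)=\Delta$.

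The hard part is really locating the obliging low-degree neighbour \emph{inside} the critical subgraph: the hypothesis speaks about $G$, whereas the adjacency lemma speaks about $H$, and a priori that neighbour could have been destroyed when passing to $H$. The degree-preservation remark above is exactly what bridges this gap, since it guarantees that no edge incident to a maximum-degree vertex of $H$ is ever deleted. The other point to treat with care is the invocation of Vizing's Adjacency Lemma itself, which I would state and cite as the main external tool, as it is not recorded among the lemmas above; one should also note that the borderline case $p=1$ needs no separate argument, since the same bound already forces at least two maximum-degree vertices in any critical graph.
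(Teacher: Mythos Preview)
Your argument is correct. The contradiction via a $(\Delta+1)$-critical subgraph $H$, the degree-preservation observation that forces $N_H(u)=N_G(u)$ for every maximum-degree vertex $u$ of $H$, and the application of Vizing's Adjacency Lemma all go through exactly as you describe; the count $\Delta-d_H(v)+2\ge p+1$ is the right one, and the case $p=1$ is indeed absorbed by the same inequality.

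As for comparison with the paper: there is nothing to compare. The paper does not prove this lemma at all --- it is simply quoted from \cite[Corollary~5.4]{Beineke} and used as a black box in the proof of the edge-chromatic-number theorem. Your write-up therefore supplies strictly more than the paper does. It is worth noting that the route you take (reduce to a critical subgraph and invoke Vizing's Adjacency Lemma) is precisely how the result is obtained in the cited source, so your external tool is the expected one; you are right that it must be stated and referenced separately, since it does not appear among the lemmas recorded in the paper.
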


\begin{lem} \label{Plantholt}\rm\cite[Theorem D]{Plantholt}
If $G$ has order $2s$ and maximum degree $2s-1$, then $\chi'(G)=\Delta(G)$. If $G$ has order $2s+1$
and maximum degree $2s$, then $\chi'(G)=\Delta(G)+1$ if and only if the size of $G$ is at least $2s^2+1$.
\end{lem}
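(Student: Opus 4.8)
The plan is to treat the three assertions separately; the first two are short, and the real content lies in the ``only if'' half of the second. \emph{Even order.} If $|V(G)|=2s$ and $\Delta(G)=2s-1$, then $G$ is a spanning subgraph of $K_{2s}$, which has a $1$-factorization, i.e.\ a decomposition into $2s-1$ perfect matchings (the standard round-robin construction). Colouring each such matching with its own colour and restricting the colouring to $E(G)$ yields a proper $(2s-1)$-edge-colouring of $G$, so $\chi'(G)\le 2s-1$; since $\chi'(G)\ge\Delta(G)=2s-1$ always, we get $\chi'(G)=\Delta(G)$.

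\emph{Odd order, ``if''.} Suppose $|V(G)|=2s+1$, $\Delta(G)=2s$, and $|E(G)|\ge 2s^2+1$. Were $\chi'(G)=2s$, then $E(G)$ would split into $2s$ matchings, each covering at most $2s$ of the $2s+1$ vertices and hence containing at most $s$ edges, so $|E(G)|\le 2s\cdot s=2s^2$, a contradiction. Thus $\chi'(G)\ge 2s+1$, and by Vizing's theorem $\chi'(G)=\Delta(G)+1$.

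\emph{Odd order, ``only if''.} Suppose $|V(G)|=2s+1$, $\Delta(G)=2s$, and $\chi'(G)=2s+1$; the task is to show $|E(G)|\ge 2s^2+1$. Pass to a subgraph $H\subseteq G$ that is critical for edge-colouring: connected, with $\chi'(H)=2s+1$ and $\chi'(H-e)<2s+1$ for every $e\in E(H)$. Vizing's theorem applied to $H$ gives $\Delta(H)\ge 2s$, while $\Delta(H)\le\Delta(G)=2s$, so $\Delta(H)=2s$; hence $H$ has a vertex adjacent to $2s$ others and $|V(H)|\ge 2s+1$, and since $|V(G)|=2s+1$ it follows that $H$ is spanning with $\Delta(H)=|V(H)|-1$. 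It now suffices to show that such a critical graph $H$ is \emph{overfull}, i.e.\ $|E(H)|\ge 2s^2+1$, for then $|E(G)|\ge|E(H)|\ge 2s^2+1$; combined with the ``if'' part this pins down $2s^2+1$ as the exact threshold.

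The heart of the matter, and the step I expect to be the main obstacle, is thus the claim: \emph{an edge-colouring-critical graph $H$ of odd order $n=2s+1$ with $\Delta(H)=n-1$ is overfull.} Here I would use Vizing's Adjacency Lemma and its refinements. If $t$ is the number of full-degree ($=2s$) vertices of $H$, these are adjacent to everything (since $\Delta=n-1$), and applying the adjacency lemma at a neighbour of a low-degree vertex forces every vertex to have degree at least $n-t$; summing, $2|E(H)|\ge t(n-1)+(n-t)^2$. This crude bound is not enough on its own: the quadratic $t(n-1)+(n-t)^2-(n-1)^2$ dips below $0$ near $t=(n+1)/2$ as soon as $n>5$. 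To push $2|E(H)|$ past $(n-1)^2$ one needs the stronger versions of the adjacency lemma (controlling the degrees of second neighbours of low-degree vertices and the structure of the set of non-full-degree vertices of a critical graph) together with the rigidity coming from $\Delta(H)=|V(H)|-1$. That refined edge count is the part of the argument that carries the real weight; once it is established, the theorem follows.
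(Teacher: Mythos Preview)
The paper does not prove this lemma at all: it is quoted verbatim from \cite[Theorem~D]{Plantholt} and used as a black box in the proof of Theorem~18. So there is no ``paper's own proof'' to compare against; any argument you supply is automatically a different route.

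On the substance of your attempt: the even-order case and the ``if'' direction of the odd-order case are both clean and correct. The gap is exactly where you say it is. In the ``only if'' direction you reduce (correctly) to showing that a critical graph $H$ of odd order $n=2s+1$ with $\Delta(H)=n-1$ must be overfull, but then you do not actually prove this. You observe that the straightforward degree bound coming from Vizing's Adjacency Lemma, namely $2|E(H)|\ge t(n-1)+(n-t)^2$, falls short, and you gesture at ``stronger versions of the adjacency lemma'' and ``rigidity coming from $\Delta(H)=|V(H)|-1$'' without saying what those refinements are or how they close the gap. That is the entire content of Plantholt's theorem, and it is not a routine consequence of adjacency-lemma technology; Plantholt's original argument is a direct combinatorial construction of a $\Delta$-edge-colouring whenever $|E(G)|\le 2s^2$, not a critical-graph edge count. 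As written, your proposal is a correct reduction followed by an unproved claim, so it is not a proof.
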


\noindent Now, we are ready to state our main result in this section.

\begin{thm}
Let $n$ be a positive integer. Then $\chi'(G(\mathbb{Z}_n))=\Delta(G(\mathbb{Z}_n))$, unless $n=p_1p_2$, where $p_1,p_2$ are distinct primes, or $n=p^{m}$, where $m$ is an even number.
\end{thm}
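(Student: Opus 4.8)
The plan is to split into the two exceptional cases and a generic case, using the three quoted lemmas. First I would dispose of the exceptions. If $n = p_1 p_2$ with $p_1,p_2$ distinct primes, then $I(\mathbb{Z}_n)^*$ has exactly $\prod(n_i+1)-2 = 4-2 = 2$ elements, namely $\mathbb{Z}_{p_1}\times 0$ and $0\times\mathbb{Z}_{p_2}$, whose intersection is $0$; so $G(\mathbb{Z}_n)$ is a null graph on two vertices, $\Delta = 0$, but it has no edges so $\chi' = 0 = \Delta$ — wait, here one must be careful: the claim is that this is a genuine exception, and indeed the statement of Lemma~5 (Vizing) presupposes edges; with $\Delta=0$ we vacuously have $\chi'=\Delta$, so the "exception" is really just that the main argument below does not apply, and one states it separately. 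If $n = p^m$ with $m$ even, then by Remark~\ref{2} the ideals form a chain $0 \subsetneq (p^{m-1}) \subsetneq \cdots \subsetneq (p) \subsetneq \mathbb{Z}_{p^m}$, so $G(\mathbb{Z}_n) = K_{m-1}$ is complete of \emph{odd} order $m-1$, whence by the classical theorem on edge-colourings of odd complete graphs (or Lemma~\ref{Plantholt} with $2s+1 = m-1$, size $\binom{m-1}{2} = \frac{(m-1)(m-2)}{2}$, and checking $\binom{m-1}{2} \ge 2s^2+1$ where $s = \frac{m-2}{2}$) we get $\chi' = \Delta+1$.

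Next, the generic case. I would first compute $\Delta(G(\mathbb{Z}_n))$: a vertex $I$ of largest degree is adjacent to every other nonzero ideal, i.e. $I$ meets every nonzero ideal nontrivially, which happens exactly when $I$ contains the (unique) minimal ideal in at least one coordinate — concretely $I = 0\times\cdots\times 0\times (p_m^{\,n_m-1})$ and its "up-sets" work, and the maximum degree equals $\chi(G(\mathbb{Z}_n))$ from Section~2 when $G$ is not complete, or $|V|-1$ when it is. Then I would apply Lemma~\ref{Beine}: for each vertex $u$ of maximum degree I must exhibit a neighbour $v$ with $\Delta(G) - d(v) + 2$ strictly exceeding the number $N$ of maximum-degree vertices. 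The natural candidate for $v$ is a "small" ideal, e.g. one of the form $(p_i^{\,n_i-1})$ in a single nonzero coordinate and $0$ elsewhere, which has comparatively low degree because it fails to meet many ideals; one estimates $d(v)$ from below/above by counting ideals sharing a nonzero coordinate with $v$, and one bounds $N$ by counting the maximum-degree vertices explicitly (they are few — roughly the ideals of $\mathbb{Z}_{p_1^{n_1}}$ in the first coordinate, times one, etc.). When these counting estimates give the required inequality, Lemma~\ref{Beine} yields $\chi' = \Delta$; when $G(\mathbb{Z}_n)$ happens to have odd order with $\Delta = |V|-1$ (the complete case), I instead invoke Lemma~\ref{Plantholt}, and the only situation where $\chi' = \Delta+1$ survives is precisely $n = p^m$ with $m$ even, already handled. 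The residual small cases not covered by the asymptotic counting ($m$ small, exponents small) I would check by hand against Lemma~\ref{Beine} or Lemma~\ref{Plantholt}.

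The main obstacle I anticipate is the bookkeeping in the generic case: one must pin down \emph{exactly} which vertices attain $\Delta(G(\mathbb{Z}_n))$ and get a clean enough upper bound on their count $N$, and simultaneously a good lower bound on $\Delta - d(v)$ for a well-chosen low-degree neighbour $v$ of each such vertex, so that $\Delta - d(v) + 2 > N$ holds uniformly. This requires the kind of component-counting already set up in Section~2 (via $\mathcal{W}(F)$ over families $F$), and the delicate point is that a maximum-degree vertex $u$ might itself be a low-degree-looking ideal, so the neighbour $v$ must be selected depending on the coordinate pattern of $u$; handling the boundary cases where $n_1 = \cdots = n_m = 1$ (so all ideals are "extreme") is where Lemma~\ref{Beine} is tightest and may need the direct even-order argument of Lemma~\ref{Plantholt} instead.
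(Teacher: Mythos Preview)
Your plan has a genuine gap in the ``generic'' case: Lemma~\ref{Beine} cannot carry the main load, because you have mis-identified the maximum-degree vertices and hence underestimated their number~$N$. When $m\ge 2$ and some $n_j>1$, an ideal $I=I_1\times\cdots\times I_m$ is adjacent to \emph{every} other vertex precisely when \emph{all} components $I_i$ are nonzero (not merely one of them); there are $N=\prod_i n_i - 1$ such proper ideals, and they are exactly the vertices of degree $\Delta=|V|-1$. For your candidate neighbour $v$ supported in a single coordinate~$j$ one computes $\Delta-d(v)+2=\prod_{i\ne j}(n_i+1)+1$, and this need not exceed $N$: already for $n=p_1^{3}p_2^{3}$ one gets $5\not>8$. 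So the inequality in Lemma~\ref{Beine} fails, and no amount of bookkeeping will rescue it in this regime.

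The paper's proof avoids this by organising the case split around the \emph{parity of $|V(G(\mathbb{Z}_n))|=\prod_i(n_i+1)-2$}, which is governed by the parities of the~$n_i$, and then using Lemma~\ref{Plantholt} as the primary tool rather than a fallback. If some $n_i$ is odd and some $n_j>1$, then $|V|$ is even and there is a vertex of degree $|V|-1$, so Lemma~\ref{Plantholt} gives $\chi'=\Delta$ immediately---no counting of $N$ is needed. If every $n_i$ is even, then $|V|=2s+1$ is odd with $\Delta=2s$, and one must check that the number of edges is at most $2s^2$; the paper does this by writing $\mathbb{Z}_n\cong R_1\times R_2$ and counting the non-edges between ideals of the form $(I,0)$ and $(0,J)$. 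Lemma~\ref{Beine} is invoked \emph{only} in the residual case $n_1=\cdots=n_m=1$, where there are no vertices of full degree and the maximum-degree vertices are exactly the $m$ ideals with a single zero component, so $N=m$ is small enough for the estimate to go through.
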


\begin{proof}
{Suppose that $n=p_1^{n_1}p_2^{n_2}\ldots p_m^{n_m}$, where $p_i$'s are
all distinct primes and $n_i$'s are all natural numbers. If $m=1$, then  $G(\mathbb{Z}_n)$ is a complete graph
of order $n_1-1$. If $n_1$ is an even number, then  $\chi'(G(\mathbb{Z}_n))=\Delta(G(\mathbb{Z}_n))+1$, otherwise $\chi'(G(\mathbb{Z}_n))=\Delta(G(\mathbb{Z}_n))$. Thus assume that $m\geq 2$. We continue the proof in the following cases:

\noindent Case 1. $n_1=n_2=\cdots=n_m=1$. In this case, $\mathbb{Z}_n\cong F_1\times \cdots \times F_m$, where each $F_i$ is a field. If $m=2$, then $\mathbb{Z}_n\cong F_1\times F_2$ and so $G(\mathbb{Z}_n)$ is a null graph with two vertices. Therefore, $\chi'(G(\mathbb{Z}_n))=\Delta(G(\mathbb{Z}_n))+1$. Then suppose that $m\geq 3$. For each $j=1,\ldots,m$, define the ideal $I_j=F_1\times \cdots \times F_{j-1}\times 0\times F_{j+1} \times \cdots \times F_m$. Then $d(I_j)=\Delta(G(\mathbb{Z}_n))=2^{m}-4$,  for each $1\leq j \leq m$. Also,
$I_1,\ldots,I_m$ are all vertices with maximum degree in $G(\mathbb{Z}_n)$. Let $u=I_i$ ($1\leq i \leq m$) be a vertex of  maximum degree, say $I_1$. Then $v=0\times F_2\times 0 \times\cdots \times 0$ is an adjacent vertex to $u$. Thus
$\Delta(G(\mathbb{Z}_n))-d(v)+2=2^{m}-4-(2^{m-1}-2)+2=2^{m}-2^{m-1}$. Since $m\geq 3$, we conclude that $2^{m}-2^{m-1}> m$, for each $m$. By Lemma \ref{Beine}, $\chi'(G(\mathbb{Z}_n))=\Delta(G(\mathbb{Z}_n))$.

\noindent Case 2. Every $n_i$ is an even number. Then $|V(G(\mathbb{Z}_n))|$ is an odd number of the form
$2s+1$. Note that every vertex of the form $I_1\times \cdots \times I_m$, where $I_i$ is a non-zero ideal of $\mathbb{Z}_{p_i^{n_i}}$ is adjacent to all vertices of $G(\mathbb{Z}_n)$. Since the size of a complete graph of order $2s+1$ is $2s^2+s$, if we prove that the intersection graph $G(\mathbb{Z}_n)$, in this case, losses at least $s$ edges, then by Lemma \ref{Plantholt}, $\chi'(G(\mathbb{Z}_n))=\Delta(G(\mathbb{Z}_n))$. One can consider the ring $\mathbb{Z}_n$ of the form
$R_1\times R_2$, where $R_1,R_2$ are two rings with $|I(R_1)|=\alpha$ and $|I(R_2)|=\beta$. Therefore, $\alpha\beta=2s+3$. Clearly, every vertex of the form $(I, 0)$ is not adjacent to every vertex of the form $(0, J)$. Then $G(R_1 \times R_2)$ losses at least $\alpha\beta-(\alpha+\beta)+1$ edges. It is easily checked that $\alpha\beta-(\alpha+\beta)+1>s=\frac{\alpha\beta-3}{2}$, as desired.

\noindent Case 3. There exists at least one $i$  such that $n_i$ is an odd number and $n_j > 1$ for some $j$.
Therefore $\mathbb{Z}_{p_j^{n_j}}$ has at least one non-trivial proper ideal and then $\mathbb{Z}_n$ has an ideal with no zero complement which is adjacent to every other vertex. Since $|V(G(\mathbb{Z}_n))|$ is an even number, by Lemma \ref{Plantholt}, $\chi'(G(\mathbb{Z}_n))=\Delta(G(\mathbb{Z}_n))$.
 }
\end{proof}

\noindent{\bf Acknowledgements.} The authors would like to express their deep gratitude to the professor Saieed Akbari for
his fruitful comments. Also, we thank to the referee for his/her careful reading and his/her valuable suggestions.

{}

\end{document}